\newtheorem{Them}{Theorem}[section]
\newtheorem{Lem}[Them]{Lemma}
\newtheorem{Prop}[Them]{Proposition}
\theoremstyle{definition}
\newtheorem{Defi}[Them]{Definition}
\numberwithin{equation}{section}
\newcommand{\Hom}{\operatorname{Hom}}
\newcommand{\CB}{\mathbb{C}}
\newcommand{\FC}{\mathcal{F}}
\newcommand{\GC}{\mathcal{G}}
\newcommand{\ZB}{\mathbb{Z}}
\newcommand{\HC}{\mathcal{H}}
\author{Norihiro Nakashima}
\address{
School of Information Environment\\
Tokyo Denki University\\
Inzai, 270-1382, Japan}
\email{nakashima@mail.dendai.ac.jp}
\thanks{}
\author{Hiroaki Terao}
\address{
Department of Mathematics\\
Hokkaido University\\
Sapporo, 060-0810, Japan}
\email{hterao00@za3.so-net.ne.jp}
\thanks{}
\author{Shuhei Tsujie}
\address{
Department of Mathematics\\
Hokkaido University\\
Sapporo, 060-0810, Japan}
\email{tsujie@math.sci.hokudai.ac.jp}
\thanks{}
\keywords{basic invariants; invariant theory;
finite unitary reflection groups}
\subjclass{Primary 13A50; Secondary 20F55}
\begin{document}

\title[Canonical systems of basic invariants]{Canonical systems of basic invariants for unitary reflection groups}

\begin{abstract}
It has been known that there exists a canonical system for every
finite real reflection group. The first and the third authors obtained
an explicit formula for a canonical system in the previous paper.
In this article, we first define canonical systems for the finite
{\it unitary} reflection groups, and then prove their existence.
Our proof does not depend on the classification of unitary reflection groups.
Furthermore, we give an explicit formula for a canonical system
for every unitary reflection group.
\end{abstract}
\maketitle

\section{Introduction}
Let $V$ be an $n$-dimensional unitary space, and
$W\subseteq U(V)$ a finite unitary reflection group.
Each reflection fixes a hyperplane in $V$ pointwise.
Let $S$ denote the symmetric algebra $S(V^{\ast})$ of the
dual space $V^{\ast}$, and $S_{k}$ the vector space
consisting of homogeneous polynomials of degree $k$
together with the zero polynomial.
Then $W$ acts contravariantly on $S$ by
$(w\cdot f)(v)=f(w^{-1}\cdot v)$ for $v\in V$, $w\in W$ and $f\in S$.
The action of $W$ on $S$ preserves the degree of
homogeneous polynomials, and $W$ acts also on $S_{k}$.
The subalgebra $R=S^{W}$
of $W$-invariant polynomials of $S$ is generated by
$n$ algebraically independent homogeneous polynomials by Chevalley \cite{Che}.
A system of such generators is called a system of {\bf
basic invariants} of $R$.

Let $x_{1},\dots,x_{n}$ be an orthonormal basis for $V^{\ast}$,
and $\partial_{1},\dots,\partial_{n}$ the basis for
$V^{\ast\ast}$ dual to $x_{1},\dots,x_{n}$.
The symmetric algebra of $V^{\ast\ast}$ acts naturally on $S$
as differential operators (e.g., Kane \cite[$\S$25-2]{Kane}).
Let $\overline{c}$ denote the complex conjugate of $c\in\CB$.
For $f=\sum_{\bm{a}}c_{\bm{a}}x^{\bm{a}}\in S$,
a differential operator $f^{\ast}$ is defined by
\begin{align}
f^{\ast}:=\overline{f}(\partial):=
\sum_{\bm{a}}\overline{c_{\bm{a}}}\partial^{\bm{a}},
\end{align}
where $\bm{a}=(a_{1},\dots,a_{n})\in\ZB_{\geq 0}^n$, $c_{\bm{a}}\in\CB$,
$x^{\bm{a}}=x_{1}^{a_{1}}\cdots x_{n}^{a_{n}}$, and
$\partial^{\bm{a}}=\partial_{1}^{a_{1}}\cdots \partial_{n}^{a_{n}}$.
Note that $(cf)^{\ast}=\overline{c}f^{\ast}$,
$w\cdot(f^{\ast})=(w\cdot f)^{\ast}$ and
$w\cdot (f^{\ast}g)=(w\cdot f)^{\ast}(w\cdot g)$ for
$c\in\CB$, $w\in W$, $f,g\in S$.

Flatto and Wiener introduced canonical systems to solve
a mean value problem related to vertices for polytopes in
\cite{Fla1,Fla2,Fla-Wei}.
They proved that there exists a canonical system
for every finite real reflection group.
Later in \cite{Iwa}, Iwasaki gave a new definition of the canonical systems
as well as explicit formulas for canonical systems for some types
of reflection groups.
The first and the third authors, in their previous work \cite{NT},
obtained an explicit formula for a canonical system
for every reflection group.
In this article, we extend the definition of canonical systems to
the finite {\it unitary} reflection groups as follows.
\begin{Defi}\label{def-can-sys}
A system $\{f_{1},\dots,f_{n}\}$ of basic invariants
is said to be {\bf canonical} if it satisfies the following
system of partial differential equations:
\begin{align}
f_{i}^{\ast}f_{j}=\delta_{ij}\label{def-cbi}
\end{align}
for $i,j=1,\dots,n$, where $\delta_{ij}$ is the Kronecker delta.
\end{Defi}
Our main result is the following existence theorem.
\begin{Them}\label{Thoerem-cansys}
There exists a canonical system for every finite unitary reflection group.
\end{Them}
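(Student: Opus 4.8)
The plan is to realize a canonical system via a Gram–Schmidt-type orthogonalization with respect to a suitable Hermitian form on the space of invariants, exploiting the classical fact that the pairing $(f,g)\mapsto f^{\ast}g$ restricts to a well-behaved pairing on invariants once we work degree by degree. First I would recall that for homogeneous $f,g$ of the same degree $k$, the quantity $f^{\ast}g$ is a constant, and $(f,g)_{k}:=f^{\ast}g$ defines a positive-definite Hermitian inner product on $S_{k}$ (this is the standard apolar or Fischer inner product); moreover it is $W$-invariant because $w\cdot(f^{\ast}g)=(w\cdot f)^{\ast}(w\cdot g)$ and $W\subseteq U(V)$. Hence it restricts to a positive-definite Hermitian form on each graded piece $R_{k}=(S^{W})_{k}$ of the invariant ring. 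The degrees $d_{1}\le\cdots\le d_{n}$ of a system of basic invariants are uniquely determined by $W$, and the subtle point is that $R$ may have several basic invariants in the same degree; within a fixed degree $d$, any system of basic invariants restricted to that degree spans a canonical complement, and the number of basic invariants of degree exactly $d$ equals $\dim R_{d}-\dim(\text{decomposables in degree }d)$.

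The key steps, in order, are: (1) fix an arbitrary system of basic invariants $g_{1},\dots,g_{n}$, grouped so that $\deg g_{i}=d_{i}$ with $d_{1}\le\cdots\le d_{n}$; (2) process the degrees in increasing order, and within each degree $d$ apply Gram–Schmidt to the relevant $g_{i}$ with respect to $(\,\cdot\,,\cdot\,)_{d}$ to make them orthonormal, i.e. $f_{i}^{\ast}f_{j}=\delta_{ij}$ for $i,j$ with $d_{i}=d_{j}=d$; (3) check that the resulting $f_{1},\dots,f_{n}$ are still a system of basic invariants — this holds because Gram–Schmidt replaces each $g_{i}$ by a linear combination of the $g_{j}$ of the same degree with nonzero coefficient on $g_{i}$ itself, so algebraic independence is preserved; (4) verify the cross-degree equations $f_{i}^{\ast}f_{j}=\delta_{ij}$ for $d_{i}\neq d_{j}$: if $d_{i}>d_{j}$ then $f_{i}^{\ast}f_{j}$ has negative degree, hence is zero, and if $d_{i}<d_{j}$ then $f_{i}^{\ast}f_{j}$ is a homogeneous polynomial of positive degree $d_{j}-d_{i}$, and one must argue it vanishes.

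The main obstacle is precisely step (4) in the case $d_{i}<d_{j}$: showing that $f_{i}^{\ast}f_{j}=0$ whenever $f_{i},f_{j}$ are basic invariants with $\deg f_{i}<\deg f_{j}$. The natural approach is to observe that $f_{i}^{\ast}$ maps $W$-invariants to $W$-invariants (since $w\cdot(f_{i}^{\ast}g)=(w\cdot f_{i})^{\ast}(w\cdot g)=f_{i}^{\ast}(w\cdot g)$), so $f_{i}^{\ast}f_{j}\in R_{d_{j}-d_{i}}$; one then wants to show this invariant is in fact zero, which does not follow merely from degree considerations when $d_{j}-d_{i}$ is itself a sum of smaller degrees. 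The resolution should come from choosing the orthogonalization more carefully: rather than orthonormalizing only within each degree, I would orthogonalize the top of the invariant ring against \emph{all} lower-degree data simultaneously, using that the operator $f_{i}^{\ast}$ annihilates the ideal generated by invariants of degree $\ge d_{i}$ in appropriate cases, together with the self-adjointness relation $(f^{\ast}g,h)=(g,f h)$ that ties the condition $f_{i}^{\ast}f_{j}=0$ to orthogonality of $f_{j}$ against the principal ideal $f_{i}\cdot S$. Concretely, I expect the argument to proceed by downward induction on $d_{j}$: assuming the canonical relations hold among all invariants of degree $<d_{j}$, one adjusts the degree-$d_{j}$ basic invariants by subtracting suitable elements of $f_{i}\cdot R_{d_{j}-d_{i}}$ (for each $i$ with $d_{i}<d_{j}$) to kill $f_{i}^{\ast}f_{j}$, checking that these adjustments do not disturb the already-arranged lower-degree equations nor the algebraic independence; the classification-free nature of the proof should be preserved because every ingredient — Chevalley's theorem, unitarity of $W$, positivity of the apolar form, and the adjunction $(f^{\ast}g,h)=(g,fh)$ — is uniform across all unitary reflection groups.
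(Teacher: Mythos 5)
Your reduction of the problem is on target: the within--degree Gram--Schmidt step, the trivial vanishing when $\deg f_{i}>\deg f_{j}$, and the identification of the case $\deg f_{i}<\deg f_{j}$ as the only real difficulty all correctly locate where the content of the theorem lies. But at exactly that point the proposal stops being a proof: you write that ``one must argue it vanishes'' and that you ``expect the argument to proceed by'' adjusting the degree-$d_{j}$ generators by elements of $f_{i}\cdot R_{d_{j}-d_{i}}$, without ever establishing that such an adjustment exists --- that is, that the simultaneous system $f_{i}^{\ast}(f_{j}-P)=0$, ranging over all $i$ with $d_{i}<d_{j}$, is solvable with $P$ a decomposable invariant. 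That solvability is the entire theorem, and it is not addressed. The auxiliary claim that ``$f_{i}^{\ast}$ annihilates the ideal generated by invariants of degree $\geq d_{i}$'' is also false as stated, since $f_{i}$ itself lies in that ideal and $f_{i}^{\ast}f_{i}=\langle f_{i},f_{i}\rangle>0$; so it cannot serve as the missing lemma.

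The gap is fillable along your own lines, and the repaired argument is genuinely different from the paper's. Choose the degree-$d$ basic invariants inside the orthogonal complement of the decomposables $(R_{+}^{2})_{d}$ within $R_{d}$ with respect to the form \eqref{inner}; lifting a basis of $R_{+}/R_{+}^{2}$ degree by degree in this way produces a system of basic invariants by graded Nakayama. Now if $f_{j}\perp(R_{+}^{2})_{d_{j}}$ and $g$ is a homogeneous invariant with $0<\deg g<d_{j}$, then $g^{\ast}f_{j}$ is again an invariant of positive degree, so $g\cdot(g^{\ast}f_{j})\in(R_{+}^{2})_{d_{j}}$, and the adjunction you cite gives $0=\langle g\cdot(g^{\ast}f_{j}),f_{j}\rangle=\langle g^{\ast}f_{j},g^{\ast}f_{j}\rangle$, whence $g^{\ast}f_{j}=0$ by positive definiteness. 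This single observation closes your step (4). It avoids the machinery the paper actually uses --- the harmonic space $\HC=\{f^{\ast}\Delta\mid f\in S\}$, Steinberg's lemma, and the construction $f_{i}=\sum_{j}x_{j}\pi(\partial_{j}h_{i})$ with the commutation rule $g^{\ast}(x_{j}h)=x_{j}g^{\ast}h+(\partial_{j}g)^{\ast}h$ forcing $g^{\ast}f_{i}\in\HC\cap I=\{0\}$ --- and is more elementary; the paper's route, in exchange, yields the explicit formula developed in Section \ref{sec-explicit}. As written, however, your proposal asserts the key vanishing rather than proving it.
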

Our proof of Theorem \ref{Thoerem-cansys} is classification free.
Furthermore, we give an explicit formula (Theorem \ref{const}) for
a canonical system which is also classification free.
This formula is the same as one obtained in \cite{NT} for the real case,
and we improve the proof.

The organization of this article is as follows.
In Section \ref{sec-basic-inv}, we introduce Lemma \ref{basic-invariants}
which will play an important role in Section \ref{sec-can-exist} when we prove
the existence theorem \ref{Thoerem-cansys}.
In Section \ref{sec-explicit}, we give an explicit formula
for a canonical system.

\section{Basic invariants}\label{sec-basic-inv}
Let $R_{+}$ be the ideal of $R$ generated by homogeneous elements of
positive degrees, and $I=SR_{+}$ the ideal of $S$ generated by $R_{+}$.
We define a unitary inner product
$\langle\cdot,\cdot\rangle:S\times S\rightarrow \CB$ by
\begin{align}
\langle f,g \rangle = f^{\ast}g|_{x=0}=\overline{f}(\partial)g|_{x=0}
\qquad (f,g\in S),\label{inner}
\end{align}
where $x=(x_{1},\dots,x_{n})$ and
$\partial=(\partial_{1},\dots,\partial_{n})$.
Let $e_{H}$ be the order of the cyclic group generated
by the reflection $s_{H}\in W$ corresponding to
a reflecting hyperplane $H$.
Fix $L_{H}\in V^{\ast}$ satisfying $\ker L_{H}=H$.
Let $\Delta$ denote the product of $L_{H}^{e_{H}-1}$ as
$H$ runs over the set of all reflecting hyperplanes.
Then $\Delta$ is skew-invariant, i.e., $w\cdot\Delta=(\det w)\Delta$
for any $w\in W$. Set
\begin{align}
\HC:=\{f^{\ast}\Delta\mid f\in S\}.
\end{align}
The following lemma is obtained by Steinberg \cite{Ste}.
\begin{Lem}\label{Stein1}
Let $f\in S$ be a homogeneous polynomial. Then we have the following:
\begin{enumerate}
\item[$(1)$] $f\in I$ if and only if $f^{\ast}\Delta =0$,
\item[$(2)$] $g^{\ast}f=0$ for all $g\in I$
if and only if $f\in\HC$.
\end{enumerate}
\end{Lem}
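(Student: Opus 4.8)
The plan is to deduce both statements from the structure theory of the coinvariant algebra, following essentially Steinberg's idea. Fix a system of basic invariants $f_{1},\dots,f_{n}$ of degrees $d_{1},\dots,d_{n}$. Since $R_{+}$ is generated as an ideal of $R$ by $f_{1},\dots,f_{n}$, we have $I=(f_{1},\dots,f_{n})S$; and since $S$ is a free module of rank $|W|$ over the polynomial ring $R=\CB[f_{1},\dots,f_{n}]$ by Chevalley \cite{Che}, the elements $f_{1},\dots,f_{n}$ form a homogeneous system of parameters in the Cohen--Macaulay ring $S$, hence a regular sequence. Thus $A:=S/I$ is a graded Artinian complete intersection, with Hilbert series $\prod_{i=1}^{n}(1+t+\dots+t^{\,d_{i}-1})$ --- a palindromic polynomial of degree $N:=\sum_{i}(d_{i}-1)$ with constant and leading coefficients equal to $1$. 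In particular $\dim_{\CB}A_{m}=\dim_{\CB}A_{N-m}$ for all $m$, $A_{>N}=0$, and $\dim_{\CB}A_{N}=1$. Being a complete intersection, $A$ is Gorenstein, so its socle is one-dimensional; combined with $A_{>N}=0$ this forces the socle to equal $A_{N}$, and hence the multiplication pairing $A_{m}\times A_{N-m}\to A_{N}\cong\CB$ is perfect for every $m$. I will also use two classical facts about $\Delta$: it agrees, up to a nonzero scalar, with the Jacobian $\det(\partial f_{i}/\partial x_{j})$, so $\deg\Delta=N$; and every skew-invariant of degree $<N$ vanishes.

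For part $(1)$, the implication $f\in I\Rightarrow f^{\ast}\Delta=0$ is the easy direction. Writing $f=\sum_{k}p_{k}h_{k}$ with $p_{k}\in R_{+}$ homogeneous of positive degree and using $(pq)^{\ast}=p^{\ast}q^{\ast}$, it suffices to show $p^{\ast}\Delta=0$ for homogeneous $p\in R_{+}$; but $p^{\ast}$ is a $W$-invariant operator (since $w\cdot p^{\ast}=(w\cdot p)^{\ast}=p^{\ast}$), so $p^{\ast}\Delta$ is again skew-invariant, of degree $N-\deg p<N$, hence zero. For the converse, fix $m$ and consider the pairing $\beta\colon S_{m}\times S_{N-m}\to\CB$, $\beta(f,g):=(fg)^{\ast}\Delta|_{x=0}$, and note $\beta(f,g)=\langle g,f^{\ast}\Delta\rangle=\langle f,g^{\ast}\Delta\rangle$. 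By the easy direction $\beta$ vanishes on $I_{m}\times S_{N-m}$ and on $S_{m}\times I_{N-m}$, so it descends to a pairing $\overline{\beta}$ on $A_{m}\times A_{N-m}$, and by construction $\overline{\beta}$ is the multiplication pairing $A_{m}\times A_{N-m}\to A_{N}$ followed by the functional $\varphi\colon A_{N}\to\CB$, $\varphi(\overline{h})=h^{\ast}\Delta|_{x=0}$. This $\varphi$ is well defined (easy direction again) and nonzero, since $\varphi(\overline{\Delta})=\langle\Delta,\Delta\rangle>0$; as $\dim_{\CB}A_{N}=1$, it is an isomorphism, so $\overline{\beta}$ is perfect, and therefore the left radical of $\beta$ is precisely $I_{m}$. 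Now if $f\in S_{m}$ satisfies $f^{\ast}\Delta=0$, then $\beta(f,\cdot)=\langle\cdot,f^{\ast}\Delta\rangle\equiv0$, so $f\in I_{m}$; the general case follows by decomposing into homogeneous components. This proves $(1)$.

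For part $(2)$, one direction is immediate: if $f=h^{\ast}\Delta\in\HC$, then $g^{\ast}f=g^{\ast}h^{\ast}\Delta=h^{\ast}(g^{\ast}\Delta)=0$ for every $g\in I$, by $(1)$. For the converse, set $D(h):=h^{\ast}\Delta$, so that $\HC$ is graded with $\HC_{m}=D(S_{N-m})$; since $\ker D=I$ by $(1)$, we get $\dim_{\CB}\HC_{m}=\dim_{\CB}S_{N-m}-\dim_{\CB}I_{N-m}=\dim_{\CB}A_{N-m}=\dim_{\CB}A_{m}$, the last equality by palindromicity. On the other hand, using $\langle ph,f\rangle=\langle h,p^{\ast}f\rangle$ one checks that for homogeneous $f$ of degree $m$ the condition ``$g^{\ast}f=0$ for all $g\in I$'' is equivalent to ``$f\perp I_{m}$ with respect to $\langle\cdot,\cdot\rangle$''; since $\langle\cdot,\cdot\rangle$ is positive definite, the space of such $f$ is the orthogonal complement of $I_{m}$ in $S_{m}$, of dimension $\dim_{\CB}A_{m}$. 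As $\HC_{m}$ is contained in this space by the direction just proved, the equality of dimensions forces them to coincide in every degree, which is exactly $(2)$.

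The main obstacle is the converse half of $(1)$: it hinges on the Poincar\'e-duality (Gorenstein) property of the coinvariant algebra $S/I$, equivalently on the socle of $S/I$ sitting in the single top degree $N$, together with the identification of $\Delta$ (up to scalar) with the Jacobian of a system of basic invariants --- this is what pins down $\deg\Delta=N$ and makes $\Delta$ represent the socle. The complex conjugations in $f\mapsto f^{\ast}$ require routine bookkeeping throughout but cause no genuine difficulty, precisely because $W\subseteq U(V)$ makes complex conjugation commute with the $W$-action.
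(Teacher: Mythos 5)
Your argument is correct, but note that the paper does not actually prove this lemma at all: it is quoted from Steinberg \cite{Ste}, so what you have done is supply a self-contained proof of an input the authors take as known. Your route is the standard modern one through the coinvariant algebra: $f_{1},\dots,f_{n}$ form a regular sequence by Chevalley's freeness theorem, so $A=S/I$ is an Artinian complete intersection, hence Gorenstein with one-dimensional socle concentrated in top degree $N=\sum_{i}(d_{i}-1)=\deg\Delta$, and the perfectness of the multiplication pairing $A_{m}\times A_{N-m}\to A_{N}$, transported by the functional $\overline{h}\mapsto h^{\ast}\Delta|_{x=0}$, pins down the radical of your pairing $\beta$ as exactly $I_{m}$; part $(2)$ then follows from $(1)$ by the clean dimension count $\dim\HC_{m}=\dim A_{N-m}=\dim A_{m}=\dim I_{m}^{\perp}$ using palindromicity of the Hilbert series. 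Steinberg's original proof of the hard implication ($f^{\ast}\Delta=0\Rightarrow f\in I$) is different in flavor, proceeding by a direct analysis of the invariant differential operators and the reflecting hyperplanes rather than by Poincar\'e duality, so you have genuinely replaced the cited argument rather than reproduced it. Two external inputs you should acknowledge explicitly as such: the fact that every $\det$-relative invariant of degree less than $N$ vanishes (equivalently, that the module of skew-invariants is $R\Delta$, or that $\Delta$ agrees with the Jacobian up to a nonzero scalar) is itself a nontrivial piece of relative-invariant theory for unitary reflection groups, though the paper also uses it without comment in the proof of Lemma \ref{basic-invariants}; and your perfect-pairing argument silently assumes $m\leq N$ --- the case $m>N$ is trivial since $A_{m}=0$ forces $S_{m}=I_{m}$, but it deserves one sentence. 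With those caveats the proof is complete and correct.
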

It follows from Lemma \ref{Stein1} (2) that $I$ is the orthogonal complement
of $\HC$ with respect to the inner product \eqref{inner} degreewise.

In the rest of this paper, we assume that $W$ acts on $V$ irreducibly.
We fix a $W$-stable graded subspace $U$ of $S$ such that $S=I\oplus U$.
It is known that the $U$ is isomorphic to the regular
representation of $W$
(see Bourbaki \cite[Chap. 5 $\S$5 Theorem 2]{Bou}.)
Hence the multiplicity of $V$ in $U$ is equal to $\dim_{\CB}V=n$.
Let $\pi\,:\,S\rightarrow U$ be
the second projection with respect to the decomposition $S=I\oplus U$.
Then $\pi$ is a $W$-homomorphism.
Let $\{h_{1},\dots,h_{n}\}$ be a system of basic invariants with
$\deg h_{1}\leq\cdots\leq\deg h_{n}$.
The multiset of degrees $m_{i}:=\deg h_{i}\ (i=1,\dots,n)$ does not
depend on a choice of basic invariants.

There exists a unique linear map
$d\,:\,S\rightarrow S\otimes_{\CB}V^{\ast}$ satisfying
$d(fg)=fd(g)+gd(f)$ for $f,g\in S$ and
$dL:=1\otimes L\in\CB\otimes_{\CB}V^{\ast}$
for $L\in V^{\ast}$. The map $d$ is called the differential map.
The differential $1$-form $dh$ is expressed as
$$
dh=\sum_{j=1}^n \partial_{j}h\otimes x_{j}=
\sum_{j=1}^n (\partial_{j}h) dx_{j}
$$
for $h\in S$. Note that $dh$ is invariant if $h$ is invariant.
Define a $W$-homomorphism
$$
\varepsilon\,:\,(S\otimes_{\CB}V^{\ast})^W
\rightarrow R_{+}
$$
by
$$
\varepsilon\left(\sum_{j=1}^n h_{j}dx_{j}\right)=\sum_{j=1}^n x_{j}h_{j}.
$$
Then $\varepsilon\circ d(h)=(\deg h)h$ for any homogeneous polynomial $h$.
The projection $\pi\,:\,S\rightarrow U$ is extended to a $W$-homomorphism
$\tilde{\pi}\,:\,(S\otimes V^{\ast})^W\rightarrow(U\otimes V^{\ast})^W$
defined by
$\tilde{\pi}(\sum_{j=1}^n g_{j}\otimes x_{j})
:=\sum_{j=1}^n\pi(g_{j})\otimes x_{j}$.

\begin{Lem}\label{basic-invariants}
Let $\{h_{1},\dots,h_{n}\}$ be a system of basic invariants.
Put $f_{i}:=(\varepsilon\circ\tilde{\pi})(dh_{i})$,
and $\{f_{1},\dots,f_{n}\}$ is a system of basic invariants.
\end{Lem}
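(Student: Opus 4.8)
The plan is to show that $f_i$ is a homogeneous invariant of degree $m_i$ that agrees with $m_i h_i$ modulo the ideal $S_+I$ (where $S_+=(x_1,\dots,x_n)$ is the irrelevant ideal of $S$), and then to promote this congruence---via the graded Nakayama lemma together with the Reynolds operator---to the statement that $f_1,\dots,f_n$ generate $R$ as a $\CB$-algebra, which since $R$ is a polynomial ring in $n$ variables forces them to be a system of basic invariants.

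First I would unwind the definitions. Since $dh_i=\sum_j(\partial_j h_i)\otimes x_j$, we get $\tilde\pi(dh_i)=\sum_j\pi(\partial_j h_i)\otimes x_j$ and hence $f_i=(\varepsilon\circ\tilde\pi)(dh_i)=\sum_{j=1}^n x_j\,\pi(\partial_j h_i)$; in particular $f_i\in R_+$ is $W$-invariant (it lies in the image of $\varepsilon$) and homogeneous of degree $m_i$ (because $\pi$ is degree-preserving and $\partial_j h_i$ has degree $m_i-1$). On the other hand Euler's identity, i.e. $\varepsilon\circ d(h_i)=(\deg h_i)h_i$, gives $\sum_j x_j\partial_j h_i=m_ih_i$, so
\[
f_i-m_ih_i=-\sum_{j=1}^n x_j\bigl(\partial_j h_i-\pi(\partial_j h_i)\bigr).
\]
Since $\pi$ is the projection along $I$, each factor $\partial_j h_i-\pi(\partial_j h_i)$ lies in $I$, and $x_j\in S_+$; hence $f_i-m_ih_i\in S_+I$, that is, $f_i\equiv m_ih_i\pmod{S_+I}$, equivalently $h_i\equiv m_i^{-1}f_i\pmod{S_+I}$ as $m_i\geq 1$.

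Next I would pass to ideals of $S$. Put $J:=(f_1,\dots,f_n)S$; note $J\subseteq I$ since each $f_i\in R_+\subseteq I$. Because $R=\CB[h_1,\dots,h_n]$ we have $R_+=(h_1,\dots,h_n)R$ and therefore $I=SR_+=(h_1,\dots,h_n)S$; the congruence above shows $h_i\in J+S_+I$ for every $i$, so $I=(h_1,\dots,h_n)S\subseteq J+S_+I$, and thus $I=J+S_+I$. Consequently the finitely generated graded $S$-module $I/J$ (which is bounded below in degree) satisfies $I/J=S_+(I/J)$, and graded Nakayama gives $I/J=0$, i.e. $I=(f_1,\dots,f_n)S$. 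Now I would descend to $R$ with the Reynolds operator $\rho=|W|^{-1}\sum_{w\in W}w\cdot(-)$: any $r\in R_+\subseteq I$ can be written $r=\sum_i s_if_i$ with $s_i\in S$, and applying $\rho$, which fixes $r$ and each $f_i$, yields $r=\sum_i\rho(s_i)f_i$ with $\rho(s_i)\in R$. Hence $R_+=(f_1,\dots,f_n)R$, so $R=\CB[f_1,\dots,f_n]$ (every element of $R_+$ is a polynomial in the $f_i$, by induction on degree), and since $R$ has transcendence degree $n$ over $\CB$ these $n$ generators are algebraically independent; therefore $\{f_1,\dots,f_n\}$ is a system of basic invariants.

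The only step that requires genuine care is the first one: one must track the definitions of $d$, $\tilde\pi$ and $\varepsilon$ precisely enough to land the congruence modulo the \emph{smaller} ideal $S_+I$ rather than merely modulo $I$, since it is exactly this refinement that powers the Nakayama argument. The remaining steps are routine commutative algebra, provided one is careful that the modules $I/J$ and $R_+/(f_1,\dots,f_n)R$ to which graded Nakayama is applied are finitely generated and bounded below in degree.
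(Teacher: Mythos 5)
Your proof is correct, but it takes a genuinely different route from the paper's. Both arguments start from the same computation, namely that $f_i=\sum_j x_j\pi(\partial_j h_i)$ is an invariant of degree $m_i$ with $m_ih_i-f_i$ lying in a small ideal; from there the paper shows (via the averaging operator) that $r_i:=m_ih_i-f_i\in I^2\cap R$, deduces $\partial_j r_i\in I$, and concludes $J(f_1,\dots,f_n)\equiv J(m_1h_1,\dots,m_nh_n)\not\equiv 0\pmod I$ using Steinberg's lemma ($\Delta\notin I$) together with the fact that $J(h_1,\dots,h_n)$ is a nonzero multiple of $\Delta$; algebraic independence then follows from the Jacobian criterion. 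You instead observe the weaker (and more directly available) containment $m_ih_i-f_i\in S_+I$, feed it into graded Nakayama to get $I=(f_1,\dots,f_n)S$, descend to $R_+=(f_1,\dots,f_n)R$ with the Reynolds operator, and finish with the transcendence-degree count. Your route avoids the Jacobian, Steinberg's lemma, and the averaging step entirely, at the cost of invoking the standard Nakayama/Reynolds machinery; the paper's route is shorter given that $\Delta\notin I$ and the Jacobian criterion are already on the table from Lemma \ref{Stein1} and \cite{Hum}. All the steps you flag as needing care (the congruence modulo $S_+I$ rather than $I$, finite generation and boundedness below of $I/J$) do check out.
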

\begin{proof}
Since $h_{1},\dots,h_{n}$ are invariants, so are the $1$-forms
$dh_{1},\dots,dh_{n}$. Thus each $f_{i}$ is invariant because
both $\varepsilon$ and $\tilde{\pi}$ are $W$-homomorphisms.

Next we prove that $\{f_{1},\dots,f_{n}\}$ is a system of basic invariants.
Define $f_{ij}:=\pi(\partial_{j}h_{i})$.
Then $f_{i}=\sum_{j=1}^{n}x_{j}f_{ij}$.
For $j=1,\dots,n$, we express
$\partial_{j}h_{i}=f_{ij}+r_{ij}$ for some $r_{ij}\in I$.
Then $r_{ij}=\sum_{k=1}^{\ell}h_{k}g_{ijk}$
for some $g_{ijk}\in S$. Put $r_{i}:=\sum_{j=1}^n x_{j}r_{ij}$
for $i=1,\dots,n$. Then we have
$$
m_{i}h_{i}=\sum_{j=1}^n x_{j}\partial_{j}h_{i}=f_{i}+r_{i}.
$$
Since $f_{i}$ is invariant, the polynomial $r_{i}=m_{i}h_{i}-f_{i}$
is also invariant. This implies
$$
r_{i}=r_{i}^{\sharp}=
\sum_{k=1}^{\ell}\left(\sum_{j=1}^n x_{j}g_{i,j,k}\right)^{\sharp}h_{k}
\in I^2\cap R,
$$
where ${\sharp}$ denotes the averaging operator, i.e.,
$$
f^{\sharp}=\frac{1}{|W|}\sum_{w\in W}w\cdot f
$$
for $f\in S$. Thus we have $\partial_{j}r_{i}\in I$
for $i,j\in\{1,\dots,n\}$.
Let $J(g_{1},\dots,g_{n})$ denote the Jacobian for $g_{1},\dots,g_{n}\in S$.
Then
\begin{align*}
J(m_{1}h_{1},\dots,m_{n}h_{n})=&\det[\partial_{j}f_{i}+\partial_{j}r_{i}]_{i,j}\\
\equiv&\det[\partial_{j}f_{i}]_{i,j}=J(f_{1},\dots,f_{n})\pmod{I}.
\end{align*}
It immediately follows that $\Delta\notin I$ by Lemma \ref{Stein1} (1).
Since $J(h_{1},\dots,h_{n})$ is a nonzero constant multiple of $\Delta$,
we obtain $J(f_{1},\dots,f_{n})\notin I$, and thus
$J(f_{1},\dots,f_{n})\neq 0$.
By the Jacobian criterion (e.g., \cite[Proposition 3.10]{Hum}),
$\{f_{1},\dots,f_{n}\}$ is algebraically independent.
Therefore $\{f_{1},\dots,f_{n}\}$ is a system of basic invariants
because $\deg f_{i}=\deg h_{i}$ for $i=1,\dots,n$.
\end{proof}

\begin{flushleft}
{\it Remark.}
There exists a $W$-stable subspace $U^{\prime}$ of $S$ such that
$S=I\oplus U^{\prime}$ and
$df_{1},\dots,df_{n}\in(U^{\prime}\otimes_{\CB}V^{\ast})^W$.
However, we do not know whether $U^{\prime}$ coincides with $U$ or not.
In section \ref{sec-can-exist}, we see that $U$ and $U^{\prime}$ coincide
when $U=\HC$.
\end{flushleft}

\section{Existence of a canonical system}\label{sec-can-exist}
In this section, we prove
Theorem \ref{Thoerem-cansys}
which is the existence theorem
of a canonical system.
The following lemma is widely known.
\begin{Lem}\label{weyl}
Let $g\in S$ be a homogeneous polynomial, and put
$g_{j}:=\partial_{j}g$ for $j=1,\dots,n$. Then, for any $h\in S$, we have
\begin{align}
g^{\ast}(x_{j}h)=x_{j}g^{\ast}h+g_{j}^{\ast}h.\label{deff-ope}
\end{align}
\end{Lem}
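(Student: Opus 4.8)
The plan is to reduce \eqref{deff-ope} to the elementary commutation relation between a constant-coefficient differential operator and multiplication by a coordinate. First I would record the basic properties of the operation $\ast$: since $\overline{fg}(\partial)=\overline{f}(\partial)\,\overline{g}(\partial)$ and any two constant-coefficient differential operators commute, $f\mapsto f^{\ast}$ is a conjugate-linear ring isomorphism from $S$ onto $\CB[\partial_{1},\dots,\partial_{n}]$; moreover complex conjugation commutes with $\partial_{j}$, so $(\partial_{j}f)^{\ast}=\partial_{j}(f^{\ast})$ as operators. Both sides of \eqref{deff-ope} are additive and conjugate-linear in $g$, so it suffices to prove the identity when $g$ runs over the monomials $x^{\bm{a}}$; equivalently, writing $M_{x_{j}}$ for the operator ``multiplication by $x_{j}$'', it suffices to establish the operator identity
$$
g^{\ast}M_{x_{j}}-M_{x_{j}}g^{\ast}=g_{j}^{\ast}\qquad(j=1,\dots,n),
$$
after which one applies both sides to $h$ and uses $g_{j}=\partial_{j}g$.

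To prove this commutator formula I would induct on $\deg g$. The constant case is trivial (scalars commute with $M_{x_{j}}$), and for $g=x_{i}$ it reduces to $\partial_{i}M_{x_{j}}-M_{x_{j}}\partial_{i}=\delta_{ij}\,\mathrm{id}$, i.e.\ the ordinary product rule $\partial_{i}(x_{j}h)=x_{j}\partial_{i}h+\delta_{ij}h$. For the inductive step, write a monomial of positive degree as a product $fg$ of monomials of lower degree and observe that $g\mapsto g^{\ast}M_{x_{j}}-M_{x_{j}}g^{\ast}$ obeys a Leibniz rule: expanding $(fg)^{\ast}M_{x_{j}}-M_{x_{j}}(fg)^{\ast}=f^{\ast}g^{\ast}M_{x_{j}}-M_{x_{j}}f^{\ast}g^{\ast}$ and inserting the inductive identities for $f$ and $g$ gives $(\partial_{j}f)^{\ast}g^{\ast}+f^{\ast}(\partial_{j}g)^{\ast}$, which equals $(\partial_{j}(fg))^{\ast}=f^{\ast}(\partial_{j}g)^{\ast}+g^{\ast}(\partial_{j}f)^{\ast}$ because $(\partial_{j}f)^{\ast}$ and $g^{\ast}$, being constant-coefficient differential operators, commute. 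Alternatively one can skip the induction and argue directly on a monomial: for $g=x^{\bm{a}}$ one has $g^{\ast}=\partial^{\bm{a}}$ and $g_{j}^{\ast}=a_{j}\partial^{\bm{a}-\bm{e}_{j}}$ (with $\bm{e}_{j}$ the $j$-th unit multi-index), so the claim becomes $\partial^{\bm{a}}(x_{j}h)=x_{j}\partial^{\bm{a}}h+a_{j}\partial^{\bm{a}-\bm{e}_{j}}h$, which follows by peeling off the factors of $\partial^{\bm{a}}$ one at a time and applying $[\partial_{i},M_{x_{j}}]=\delta_{ij}$ at each step.

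There is no real obstacle in this lemma; the only point that requires a little attention is the bookkeeping of commutativity in the Leibniz computation, namely that rearranging $(\partial_{j}f)^{\ast}g^{\ast}$ into $g^{\ast}(\partial_{j}f)^{\ast}$ is legitimate precisely because all the operators involved are polynomials in the commuting $\partial_{1},\dots,\partial_{n}$. Everything else is the classical product rule together with the reduction to monomials afforded by conjugate-linearity and additivity.
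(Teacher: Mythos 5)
Your proof is correct and follows essentially the same route as the paper: reduce to the case of a monomial $g=x^{\bm{a}}$ by (conjugate-)linearity and then induct on $\deg g$ using the basic relation $\partial_{i}(x_{j}h)=x_{j}\partial_{i}h+\delta_{ij}h$. Indeed, the ``alternative'' direct computation you mention at the end, $\partial^{\bm{a}}(x_{j}h)=x_{j}\partial^{\bm{a}}h+a_{j}\partial^{\bm{a}-\bm{e}_{j}}h$ obtained by peeling off one derivative at a time, is exactly the paper's argument; your commutator/Leibniz packaging is only a cosmetic variation.
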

\begin{proof}
We only need to verify the assertion when $g$ is a monomial.
We verify it by induction on $\deg g$.
Let $\bm{a}=(a_{1},\dots,a_{n})$ be a multi-index with $|\bm{a}|=\deg g$.
Then
\begin{align*}
\partial^{\bm{a}}(x_{j}h)&=\partial^{\bm{a}-\bm{e}_{j}}\partial_{j}(x_{j}h)
=\partial^{\bm{a}-\bm{e}_{j}}h+
\partial^{\bm{a}-\bm{e}_{j}}(x_{j}\partial_{j}h)\\
&=\partial^{\bm{a}-\bm{e}_{j}}h+
\left(x_{j}\partial^{\bm{a}-\bm{e}_{j}}\partial_{j}h+
(a_{j}-1)\partial^{\bm{a}-2\bm{e}_{j}}\partial_{j}h\right)\\
&=x_{j}\partial^{\bm{a}}h+a_{j}\partial^{\bm{a}-\bm{e}_{j}}h.
\end{align*}
\end{proof}

By Lemma \ref{Stein1}, $I$ is the orthogonal complement of
$\HC$ with respect to the inner product \eqref{inner}, and
the $W$-stable graded space $S$ is decomposed into
the direct sum of the $W$-stable graded subspaces
$I$ and $\HC$, i.e., $S=I\oplus\HC$.
Let $\pi\,:\,S\rightarrow \HC$ be the second projection with respect to
the decomposition $S=I\oplus\HC$.
Let $h_{1},\dots,h_{n}$ be an arbitrary system of basic invariants.
Put $f_{ij}:=\pi(\partial_{j}h_{i})\in\HC=\{f^{\ast}\Delta\mid f\in S\}$ for $i,j=1,\dots,n$,
and $f_{i}:=(\varepsilon\circ\tilde{\pi})(dh_{i})=\sum_{j=1}^n x_{j}f_{ij}$
for $i=1,\dots,n$. Then $\{f_{1},\dots,f_{n}\}$ is a system
of basic invariants by Lemma \ref{basic-invariants}.
We are now ready to give a proof of Theorem \ref{Thoerem-cansys}.

Let $g\in R_{+}$ be a homogeneous invariant polynomial with $\deg g<m_{i}$.
By using Lemma \ref{Stein1} and Lemma \ref{weyl} we obtain
\begin{align*}
g^{\ast}f_{i}&=\sum_{j=1}^n g^{\ast}(x_{j}f_{ij})
=\sum_{j=1}^n\left(x_{j}g^{\ast}f_{ij}+g_{j}^{\ast}f_{ij}\right)
=\sum_{j=1}^n g_{j}^{\ast}f_{ij}\in\HC.
\end{align*}
Meanwhile, $g^{\ast}f_{i}$ is an invariant polynomial of positive degree
since $g$ and $f_{i}$ are invariant and $\deg g<m_{i}$.
Therefore we have
$$
g^{\ast}f_{i}\in\HC\cap I=\{0\}.
$$
In particular, when $g=f_{j}$ with $\deg f_{j}<m_{i}$,
we have $f_{j}^{\ast}f_{i}=0$.
It immediately follows that
$f_{j}^{\ast}f_{i}=0$ if $\deg f_{j}>\deg f_{i}$.
Applying the Gram-Schmidt orthogonalization with respect to the inner product
\eqref{inner}, we obtain a canonical system of basic invariants.
This completes our proof of Theorem \ref{Thoerem-cansys}.
\smallskip

The subspace spanned by a canonical system
can be characterized
as follows:

\begin{Prop}\label{chara-can-sys}
Let
$\{f_{1},\dots,f_{n}\}$ be a canonical system
and
$\FC:=\langle f_{1},\dots,f_{n}\rangle_{\CB}$.
Then
\begin{enumerate}
\item[$(1)$] $\FC=
\bigoplus_{k=1}^{\infty}
\{f\in R_{k}\mid g^{\ast}f=0\ {\rm for}\ g\in R_{\ell}
\ {\rm with}\ 0<\ell<k\}
$,
\label{Fteisiki}
where $R_{k}:=R\cap S_{k}$,
\item[$(2)$] $
\langle df_{1},\dots,df_{n}\rangle_{\CB}=
(\HC\,\otimes_{\CB}\,V^{\ast})^W,$
\label{HtimesV=df}
\item[$(3)$] $\FC=\varepsilon((\HC\otimes_{\CB}V^{\ast})^W)$
\label{F=can-sys}.
\end{enumerate}

\end{Prop}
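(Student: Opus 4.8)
The plan is to prove the three characterizations in the order (1), then (2), then (3), since each builds naturally on the previous one and on the proof of Theorem~\ref{Thoerem-cansys} just completed. For part (1), write $\FC_{k}:=\{f\in R_{k}\mid g^{\ast}f=0\text{ for }g\in R_{\ell}\text{ with }0<\ell<k\}$. The inclusion $\FC\subseteq\bigoplus_{k}\FC_{k}$ is essentially already established: in the proof of the existence theorem we saw that for any canonical system and any homogeneous invariant $g$ of degree $0<\ell<m_{i}$ one has $g^{\ast}f_{i}=0$, so each $f_{i}$ lies in $\FC_{m_{i}}$. For the reverse inclusion I would count dimensions degree by degree. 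The key point is that $\dim_{\CB}\FC_{k}$ equals the number of $i$ with $m_{i}=k$: indeed $\FC_{k}$ is the orthogonal complement, inside $R_{k}$ with respect to $\langle\cdot,\cdot\rangle$, of the subspace $\sum_{0<\ell<k}R_{\ell}\cdot R_{k-\ell}$ (using that $g^{\ast}(fh)$-type adjointness turns the condition $g^{\ast}f=0$ into orthogonality to products $g\cdot(\text{something})$). Since $R$ is a polynomial ring on generators of degrees $m_{1},\dots,m_{n}$, the quotient of $R_{k}$ by the degree-$k$ part of the ideal $(R_{+})^{2}\cap R$ has dimension exactly $\#\{i:m_{i}=k\}$. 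Combined with the fact that $\FC$ already contains $\#\{i:m_{i}=k\}$ linearly independent elements in degree $k$ (the $f_{i}$ with $m_{i}=k$, which are independent since $\{f_{1},\dots,f_{n}\}$ is a basic system), the dimension count forces $\FC_{k}\cap R_{k}$ to be spanned by those $f_{i}$, giving equality.

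For part (2), the inclusion $\langle df_{1},\dots,df_{n}\rangle_{\CB}\subseteq(\HC\otimes_{\CB}V^{\ast})^{W}$ needs two observations: each $df_{i}$ is $W$-invariant because $f_{i}$ is invariant, and each $df_{i}=\sum_{j}(\partial_{j}f_{i})\otimes x_{j}$ has all its coefficients $\partial_{j}f_{i}$ in $\HC$. The latter follows from part (1): if $g\in R_{\ell}$ with $0<\ell$, then by Lemma~\ref{weyl} and the adjointness computation as in the existence proof, $g^{\ast}(\partial_{j}f_{i})$ is governed by $g^{\ast}f_{i}$ and lower data, and one checks that $g^{\ast}(\partial_{j}f_{i})=0$ for all $g\in I$, which by Lemma~\ref{Stein1}(2) is exactly the statement $\partial_{j}f_{i}\in\HC$. (Alternatively: $\partial_{j}f_{i}=f_{ij}$ by construction in this section, and $f_{ij}=\pi(\partial_{j}h_{i})\in\HC$ by definition of $\pi$, together with the Remark's observation that for a canonical system $U'=U=\HC$.) For the reverse inclusion I would again count dimensions: the multiplicity of the trivial representation in $\HC\otimes_{\CB}V^{\ast}$ equals the multiplicity of $V$ in $\HC\cong$ regular representation, which is $n$; and $df_{1},\dots,df_{n}$ are $n$ elements, linearly independent over $\CB$ because the Jacobian $J(f_{1},\dots,f_{n})\neq0$ as shown in the proof of Lemma~\ref{basic-invariants}. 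Hence the two spaces have the same dimension $n$ and one contains the other.

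For part (3), I would combine (2) with the relation $\varepsilon\circ d(h)=(\deg h)h$ for homogeneous $h$. Since $d f_{i}\in(\HC\otimes_{\CB}V^{\ast})^{W}$ by part~(2), we get $\varepsilon(df_{i})=m_{i}f_{i}\in\FC$, so $\varepsilon((\HC\otimes_{\CB}V^{\ast})^{W})\supseteq\langle m_{1}f_{1},\dots,m_{n}f_{n}\rangle_{\CB}=\FC$ (the $m_{i}$ are positive integers). Conversely, $\varepsilon$ maps $(\HC\otimes_{\CB}V^{\ast})^{W}=\langle df_{1},\dots,df_{n}\rangle_{\CB}$ into $\langle\varepsilon(df_{1}),\dots,\varepsilon(df_{n})\rangle_{\CB}=\FC$. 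Hence equality. The main obstacle in this whole argument is getting the dimension count in part (1) airtight — specifically, verifying cleanly that $g^{\ast}f=0$ for all $g\in R_{\ell}$, $0<\ell<k$, is equivalent to $f\in R_{k}$ being orthogonal under $\langle\cdot,\cdot\rangle$ to the degree-$k$ component of $(R_{+})^{2}$, and then identifying the dimension of the resulting quotient with $\#\{i:m_{i}=k\}$ using the polynomial-ring structure of $R$; once that is in place, parts (2) and (3) follow by the multiplicity bookkeeping already available from the regular-representation description of $\HC$ and the nonvanishing of the Jacobian.
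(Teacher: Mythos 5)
Your argument is correct in outline and reaches all three statements, but for part (1) it takes a genuinely different route from the paper. The paper proves (1) and (2) in a single stroke: writing $\GC$ for the right-hand side of (1), it shows $d(\GC)\subseteq(\HC\otimes_{\CB}V^{\ast})^{W}$, notes that $d$ is injective on $\GC$ and that $\dim(\HC\otimes_{\CB}V^{\ast})^{W}=\dim\Hom_{W}(V,\HC)=n$ because $\HC$ affords the regular representation and $V$ is irreducible, and then squeezes $n=\dim\FC\le\dim\GC\le n$; equality of all these spaces yields (1) and (2) at once, and (3) follows by applying $\varepsilon$. You instead prove (1) by a purely commutative-algebraic count: adjointness identifies the degree-$k$ piece of $\GC$ with the orthogonal complement of $\left((R_{+})^{2}\right)_{k}$ inside $R_{k}$, whose codimension is $\#\{i\mid m_{i}=k\}$ because $R$ is a polynomial ring on generators of degrees $m_{1},\dots,m_{n}$. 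This is valid --- one small point worth making explicit is that $g^{\ast}f=0$ is equivalent to $f\perp g\,S_{k-\ell}$, and $W$-invariance of the inner product lets you replace $g\,S_{k-\ell}$ by $g\,R_{k-\ell}$ --- and it buys you (1) without any representation-theoretic input, which you then need only for (2). Two spots want tightening. First, in (2) the inclusion $\partial_{j}f_{i}\in\HC$ has nothing to do with Lemma \ref{weyl}; the correct (and easier) reason is that $g^{\ast}$ and $\partial_{j}$ are commuting constant-coefficient operators, so $g^{\ast}(\partial_{j}f_{i})=\partial_{j}(g^{\ast}f_{i})$, and for homogeneous $g\in I$ one writes $g=\sum_{\nu} s_{\nu}p_{\nu}$ with $p_{\nu}\in R_{+}$ and uses part (1) to conclude $\partial_{j}(g^{\ast}f_{i})=0$. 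Second, your parenthetical alternative is false: for the system of Section \ref{sec-can-exist} one has $f_{i}=\sum_{j}x_{j}f_{ij}$, hence $\partial_{j}f_{i}=f_{ij}+\sum_{l}x_{l}\partial_{j}f_{il}\neq f_{ij}$ in general, and in any case the proposition concerns an arbitrary canonical system, not the particular one constructed there. Neither issue is fatal; the main line of your argument stands.
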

\begin{proof}
Define
\[
\GC:=
\bigoplus_{k=1}^{\infty}
\{f\in R_{k}\mid g^{\ast}f=0\ {\rm for}\ g\in R_{\ell}
\ {\rm with}\ 0<\ell<k\}.
\]
Let $f\in\GC$ be a homogeneous polynomial.
For any $g\in I$, it is not hard to see that
$g^{\ast}(\partial_{j}f)=\partial_{j}(g^{\ast}f)=0$.
Thus we have $\partial_{j}f\in\HC$ for $j=1,\dots,n$ by Lemma \ref{Stein1}.
This implies $d(\GC)\subseteq(\HC\otimes_{\CB}V^{\ast})^W$.
The inclusion $\FC\subseteq\GC$
follows immediately because $\{f_{1},\dots,f_{n}\}$ is a canonical system.
Hence we have the following commutative diagram:
\begin{align}\label{seq-f_i-G-HxV}
\begin{array}{ccc}
\GC & \overset{d}{\rightarrow} & (\HC\otimes_{\CB}V^{\ast})^W \\
\rotatebox{90}{$\subseteq$} & & \rotatebox{90}{$\subseteq$} \\
\FC & \overset{d}{\rightarrow} &
\langle df_{1},\dots,df_{n}\rangle_{\CB}.
\end{array}
\end{align}

Since $\ker(d)=\CB$ and $\GC$ does not contain any nonzero constant,
the horizontal maps $d$ are both injective.
Recall that $V$ is an irreducible representation
and that
$\HC$ affords the regular representation of $W$.
Hence we have
$\dim (\HC\,\otimes_{\CB}\,V^{\ast})^W=\dim V = n$
because
$(\HC\,\otimes_{\CB}\,V^{\ast})^W\simeq\Hom_{W}(V,\HC)$;
this isomorphism is referred from \cite{Or-Solo}
or the proof of \cite[Lemma 6.45]{Orlik-Terao}.
By comparing the dimensions, we have $\FC=\GC$, which is (1).
Sending both sides of this equality by $d$, we obtain
\begin{align}
\langle df_{1},\dots,df_{n}\rangle_{\CB}=
d(\FC)=d(\GC)
=(\HC\otimes_{\CB}V^{\ast})^W.\label{seq=df_i=dG=HxV}
\end{align}
So the equality $(2)$ is proved.
Moreover, we have
\begin{align}
\langle f_{1},\dots,f_{n}\rangle_{\CB}
=
\FC=\GC
=\varepsilon((\HC\otimes_{\CB}V^{\ast})^W)
\label{seq-G=f_i=eHxV=F}
\end{align}
by applying $\varepsilon$ to \eqref{seq=df_i=dG=HxV}.
This verifies $(3)$.
\end{proof}

\section{An explicit construction of a canonical system}\label{sec-explicit}
The following is a key to our explicit formula
for
a canonical system.
\begin{Defi}[c.f. \cite{NT}]\label{def-keymap}
Define a linear map
\begin{align*}
\phi\,:\, S\rightarrow\HC
\end{align*}
by
$\phi(f):=(f^{\ast}\Delta)^{\ast}\Delta$
for $f \in S$.
The map $\phi$ induces a $W$-homomorphism
$$
\tilde{\phi}\,:\,(S\otimes V^{\ast})^W\rightarrow(\HC\otimes V^{\ast})^W
$$
defined by $\tilde{\phi}(\sum f\otimes x):=\sum\phi(f)\otimes x$.
\end{Defi}
One has
\begin{align*}
w\cdot\phi(f)&=((w\cdot f)^{\ast}(w\cdot\Delta))^{\ast}(w\cdot\Delta)
=((w\cdot f)^{\ast}(\det(w)\Delta))^{\ast}(\det(w)\Delta)\\
=&\overline{\det(w)}\det(w)((w\cdot f)^{\ast}\Delta)^{\ast}(\Delta)
=\phi(w\cdot f)
\end{align*}
for $w\in W$ and $f\in S$.
Therefore $\phi$ is a $W$-homomorphism, and so is $\tilde{\phi}$.

Let $\{h_{1},\dots,h_{n}\}$ be an arbitrary system of basic invariants,
and assume $\deg h_{i}=m_{i}$ for $i=1,\dots,n$.
Let $\{f_{1},\dots,f_{n}\}$ be a canonical system with
$\deg f_{i}=m_{i}$.
We have already shown
in Proposition \ref{chara-can-sys} (1)
that
$(\HC\otimes V^{\ast})^W=
\langle df_{1},\dots,df_{n}\rangle_{\CB}.$
\begin{Lem}\label{phi(N)neq0}.
The restriction
$$
\tilde{\phi}|_{\langle dh_{1},\dots,dh_{n}\rangle_{\CB}}:
\langle dh_{1},\dots,dh_{n}\rangle_{\CB}\rightarrow
(\HC\otimes V^{\ast})^W=\langle df_{1},\dots,df_{n}\rangle_{\CB}
$$
is isomorphic.
\end{Lem}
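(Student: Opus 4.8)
The plan is to prove that the linear map $\tilde{\phi}$, restricted to the $n$-dimensional space $\langle dh_1,\dots,dh_n\rangle_\CB$, is injective; since the target $(\HC\otimes V^\ast)^W$ also has dimension $n$ (by Proposition \ref{chara-can-sys} and the regular-representation property of $\HC$), injectivity forces bijectivity. So the real content is the injectivity of $\tilde{\phi}$ on this particular $n$-dimensional subspace.

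First I would recall that $\{h_1,\dots,h_n\}$ is a system of basic invariants, so by Lemma \ref{basic-invariants} the polynomials $g_i:=(\varepsilon\circ\tilde{\pi})(dh_i)$ form again a system of basic invariants, where now $\pi:S\to\HC$ is the projection along $I$. I would then try to identify $\tilde{\phi}(dh_i)$ explicitly. Using $\phi(f)=(f^\ast\Delta)^\ast\Delta$ together with Lemma \ref{Stein1}(1) — which says $f\in I$ iff $f^\ast\Delta=0$ — one sees that $\phi$ kills $I$, and in fact $\phi$ is (up to the choice of the form $\Delta$) a $W$-equivariant rescaling of the projection $\pi$ onto $\HC$ along $I$: more precisely $\phi$ agrees with $\pi$ up to a nonzero scalar on each irreducible isotypic piece, because $\phi$ is a $W$-endomorphism of $S$ with kernel exactly $I$ and image exactly $\HC$. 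Hence $\tilde{\phi}(dh_i)$ and the canonical $1$-forms $df_i = \sum_j \pi(\partial_j h_i)\,dx_j$ span the same space degreewise, and one gets that $\tilde{\phi}(dh_i)$ is a linear combination of the $df_j$ with $\deg f_j=\deg h_i$, with the coefficient matrix on each degree block being invertible.

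The key step — and the main obstacle — is showing that the composite $\langle dh_1,\dots,dh_n\rangle_\CB \xrightarrow{\tilde{\phi}} (\HC\otimes V^\ast)^W$ has trivial kernel, i.e.\ that the scalar factors by which $\phi$ rescales each isotypic component are nonzero. For this I would argue as follows: $\phi$ is a self-adjoint (or at least nonzero) operator with respect to the inner product \eqref{inner}, since $\langle\phi(f),g\rangle = \langle (f^\ast\Delta)^\ast\Delta, g\rangle$ can be rewritten, using $\langle a^\ast b, c\rangle = \langle b, a c\rangle$-type identities for the pairing, as a symmetric expression in $f,g$ up to conjugation; and $\phi$ restricted to $\HC$ is injective because $\Delta\notin I$ (Lemma \ref{Stein1}(1)) guarantees $f^\ast\Delta\neq 0$ for $0\neq f\in\HC$, and then a second application of Lemma \ref{Stein1} gives $(f^\ast\Delta)^\ast\Delta\neq 0$. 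Since $S=I\oplus\HC$ and $\phi|_I=0$ while $\phi|_\HC$ is injective with image in $\HC$, the map $\phi$ induces an automorphism of $\HC$. Because $\tilde{\phi}$ is built componentwise from $\phi$ and $dh_i$ lives in $(S\otimes V^\ast)^W$, applying $\tilde{\phi}$ then $\tilde{\pi}$ (or noting $\phi=\phi\circ\pi$) shows $\tilde{\phi}$ sends the $n$-dimensional space $\langle dh_1,\dots,dh_n\rangle_\CB$ onto $\tilde{\phi}((\HC\otimes V^\ast)^W)=(\HC\otimes V^\ast)^W$ bijectively.

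To organize the write-up cleanly I would (1) observe $\phi\circ\pi = \phi$ and $\phi|_I=0$; (2) prove $\phi|_\HC:\HC\to\HC$ is an isomorphism using Lemma \ref{Stein1} twice and $\Delta\notin I$; (3) deduce that $\tilde{\phi}|_{(\HC\otimes V^\ast)^W}$ is an automorphism of $(\HC\otimes V^\ast)^W$; (4) note that $\tilde{\pi}$ maps $\langle dh_1,\dots,dh_n\rangle_\CB$ isomorphically onto $(\HC\otimes V^\ast)^W$ — this is essentially the content of Lemma \ref{basic-invariants} and Proposition \ref{chara-can-sys}(2), since the $(\varepsilon\circ\tilde{\pi})(dh_i)$ form a basic invariant system and hence span $\FC$, whose differentials span $(\HC\otimes V^\ast)^W$; and (5) conclude by composing, since $\tilde{\phi}|_{\langle dh_i\rangle} = (\tilde{\phi}|_{(\HC\otimes V^\ast)^W})\circ(\tilde{\pi}|_{\langle dh_i\rangle})$. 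The one point requiring care is step (2): one must make sure the double-star construction $f\mapsto (f^\ast\Delta)^\ast\Delta$ does not accidentally vanish on $\HC$, and this is exactly where the two parts of Steinberg's Lemma \ref{Stein1} together with skew-invariance of $\Delta$ do the work.
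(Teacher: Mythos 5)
Your argument reaches the correct conclusion by a route genuinely different from the paper's. The paper proves injectivity head-on: from $\tilde{\phi}(dh)=0$ it deduces $dh\in(I\otimes_{\CB}V^{\ast})^{W}$, writes $h=\sum_{k}\lambda_{k}f_{k}+P$ with $P\in I^{2}\cap R$, and uses $df_{k}\in(\HC\otimes_{\CB}V^{\ast})^{W}$ (Proposition \ref{chara-can-sys}) together with $\langle h_{1},\dots,h_{n}\rangle_{\CB}\cap I^{2}=\{0\}$ to force $h=0$. You instead factor $\tilde{\phi}|_{\langle dh_{1},\dots,dh_{n}\rangle_{\CB}}$ as the automorphism of $(\HC\otimes V^{\ast})^{W}$ induced by $\phi|_{\HC}$ composed with $\tilde{\pi}|_{\langle dh_{1},\dots,dh_{n}\rangle_{\CB}}$. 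Your steps (1)--(3) and (5) are correct; in particular the verification that $\phi|_{\HC}$ is injective by applying Lemma \ref{Stein1}(1) twice together with $I\cap\HC=\{0\}$ is exactly right, and it cleanly isolates what the lemma is really about, namely the injectivity of $\tilde{\pi}$ on $\langle dh_{1},\dots,dh_{n}\rangle_{\CB}$.

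Two cautions. First, in step (4) the inference that the $(\varepsilon\circ\tilde{\pi})(dh_{i})$ ``form a basic invariant system and hence span $\FC$'' is a non sequitur: an arbitrary system of basic invariants need not span $\FC$ (e.g.\ replace $h_{n}$ by $h_{n}+h_{1}^{2}$ when $2m_{1}=m_{n}$), and the further claim that their differentials recover $\tilde{\pi}(dh_{i})$ would need an argument since $d\circ\varepsilon$ is not the identity. Fortunately step (4) is salvageable from exactly the lemma you cite, by a shorter route: Lemma \ref{basic-invariants} makes $\varepsilon(\tilde{\pi}(dh_{1})),\dots,\varepsilon(\tilde{\pi}(dh_{n}))$ algebraically, hence linearly, independent, so $\varepsilon\circ\tilde{\pi}$, and a fortiori $\tilde{\pi}$, is injective on the $n$-dimensional space $\langle dh_{1},\dots,dh_{n}\rangle_{\CB}$; since $\dim(\HC\otimes V^{\ast})^{W}=n$, that restriction of $\tilde{\pi}$ is an isomorphism onto $(\HC\otimes V^{\ast})^{W}$. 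Second, the assertion in your exploratory paragraph that $\phi$ agrees with $\pi$ ``up to a nonzero scalar on each irreducible isotypic piece'' is both unnecessary and incorrect as stated: $\HC$ affords the regular representation, so nontrivial irreducibles occur with multiplicity greater than one, and a $W$-endomorphism of an isotypic component is then an invertible matrix rather than a scalar. Your final five-step write-up does not rely on this, so it is harmless, but it should be dropped.
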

\begin{proof}
It is enough to prove the injectivity.
Fix $h\in\langle h_{1},\dots,h_{n}\rangle_{\CB}$
with
$\tilde{\phi}(dh)=0$. It follows from Lemma \ref{Stein1} that
$\ker\tilde{\phi}=(I\otimes_{\CB}V^{\ast})^W$.
Then we have $dh\in(I\otimes_{\CB}V^{\ast})^W$.
At the same time, since $\{f_{1},\dots,f_{n}\}$ is a system of
basic invariants, we may write
\begin{align*}
h=\sum_{k=1}^{n}\lambda_{k}f_{k}+P,
\end{align*}
where $\lambda_{k}\in\CB$ and $P\in I^2\cap R$.
Then the $1$-form $dP$ lies in $(I\otimes_{\CB}V^{\ast})^W$,
and $df_{k}\in(\HC\otimes_{\CB}V^{\ast})^W$ for $k=1,
\dots,n$
by Proposition \ref{chara-can-sys}. Hence we have
$$
\sum_{k=1}^n \lambda_{k}df_{k}=dh-dP
\in(\HC\otimes_{\CB}V^{\ast})^W\cap(I\otimes_{\CB}V^{\ast})^W=\{0\}.
$$
This implies $\lambda_{k}=0$ for all $k=1,\dots,n$
since $\{df_{1},\dots,df_{n}\}$ is linearly independent over $\CB$.
Thus we have $h=P\in I^2\cap R$.
The algebraic independence of $h_{1},\dots,h_{n}$
implies $\langle h_{1},\dots,h_{n}\rangle\cap I^2=\{0\}$.
Therefore $h=0$.
\end{proof}
The image of
$\tilde{\phi}|_{\langle dh_{1},\dots,dh_{n}\rangle_{\CB}}$
coincides with $\langle df_{1},\dots,df_{n}\rangle_{\CB}$
by Lemma \ref{phi(N)neq0}.
Therefore we have a chain of the linear maps:
\begin{align}
\langle h_{1},\dots,h_{n}\rangle_{\CB}
\overset{d}{\rightarrow}
\langle dh_{1},\dots,dh_{n}\rangle_{\CB}
\overset{\tilde{\phi}}{\rightarrow}
\langle df_{1},\dots,df_{n}\rangle_{\CB}
\overset{\varepsilon}{\rightarrow}
\langle f_{1},\dots,f_{n}\rangle_{\CB}.\label{epd}
\end{align}
The image of $\{
h_{1},\dots,h_{n}\}$ by
the composition of all the maps in \eqref{epd} forms a basis for
$\langle f_{1},\dots,f_{n}\rangle_{\CB}$. Thus we have the following
explicit formula for a canonical system of basic invariants. 
\begin{Them}[c.f. \cite{NT}]\label{const}
Let $h_{1},\dots,h_{n}$ be an arbitrary system of basic invariants.
Applying the Gram-Schmidt orthogonalization to
\begin{align*}
\left\{\varepsilon\circ\tilde{\phi}(dh_{i})
=\sum_{j=1}^n x_{j}\phi(\partial_{j}h_{i})\,\middle|\,
i=1,\dots,n\right\}
\end{align*}
with respect to the inner product \eqref{inner}, we obtain
a canonical system of basic invariants.
\end{Them}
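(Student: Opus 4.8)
The plan is to read off a canonical system by unwinding the chain of linear maps \eqref{epd}. First I would check that every arrow in \eqref{epd} is injective on the indicated subspace: $d$ is injective since $\ker d=\CB$ and each $h_{i}$ has positive degree; $\tilde{\phi}$ restricted to $\langle dh_{1},\dots,dh_{n}\rangle_{\CB}$ is an isomorphism onto $(\HC\otimes V^{\ast})^{W}$ by Lemma \ref{phi(N)neq0}; and $\varepsilon$ restricted to $(\HC\otimes V^{\ast})^{W}$ is injective, since $\varepsilon((\HC\otimes V^{\ast})^{W})=\FC$ by Proposition \ref{chara-can-sys}\,(3) while both spaces have dimension $n$. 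Hence the $n$ polynomials $g_{i}:=\varepsilon\circ\tilde{\phi}(dh_{i})$ are linearly independent and span $\FC=\langle f_{1},\dots,f_{n}\rangle_{\CB}$, where $\{f_{1},\dots,f_{n}\}$ is the fixed canonical system. Unravelling the definitions of $d$, $\tilde{\phi}$ and $\varepsilon$ yields the closed form $g_{i}=\sum_{j=1}^{n}x_{j}\phi(\partial_{j}h_{i})$, and since $\phi$ preserves degree while $\partial_{j}$ and multiplication by $x_{j}$ shift degree by $\mp1$, each $g_{i}$ is homogeneous of degree $m_{i}$.

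Next I would establish the triangularity that makes Gram--Schmidt work, imitating the proof of Theorem \ref{Thoerem-cansys}. Let $g$ be a homogeneous invariant with $0<\deg g<m_{i}$. By Lemma \ref{weyl},
\begin{align*}
g^{\ast}g_{i}=\sum_{j=1}^{n}\left(x_{j}\,g^{\ast}\phi(\partial_{j}h_{i})+(\partial_{j}g)^{\ast}\phi(\partial_{j}h_{i})\right).
\end{align*}
Each $\phi(\partial_{j}h_{i})$ lies in $\HC$, so $g^{\ast}\phi(\partial_{j}h_{i})=0$ by Lemma \ref{Stein1}\,(2) (as $g\in R_{+}\subseteq I$), while $(\partial_{j}g)^{\ast}\phi(\partial_{j}h_{i})\in\HC$ because $\HC$ is stable under every operator $p^{\ast}$, $p\in S$. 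Thus $g^{\ast}g_{i}\in\HC$; but $g^{\ast}g_{i}$ is also an invariant of positive degree, hence lies in $I$, so $g^{\ast}g_{i}\in\HC\cap I=\{0\}$. In particular $g_{j}^{\ast}g_{i}=0$ whenever $m_{j}<m_{i}$, and likewise $g_{i}^{\ast}g_{j}=0$ whenever $m_{i}<m_{j}$; when $m_{i}=m_{j}$ the polynomial $g_{i}^{\ast}g_{j}$ has degree $0$, hence equals the scalar $\langle g_{i},g_{j}\rangle$.

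Finally I would apply Gram--Schmidt with the $g_{i}$ grouped by equal degree. Since the inner product \eqref{inner} of homogeneous polynomials of distinct degrees vanishes, the process mixes only $g_{i}$'s of a common degree and produces an orthonormal basis $\{\tilde{f}_{1},\dots,\tilde{f}_{n}\}$ of $\FC$ with $\tilde{f}_{i}\in\langle g_{k}\mid m_{k}=m_{i}\rangle_{\CB}$, so $\deg\tilde{f}_{i}=m_{i}$. Writing $\tilde{f}_{i}=\sum_{k}a_{ik}f_{k}$ with $(a_{ik})$ invertible and block-diagonal by degree, the Jacobian matrix of $(\tilde{f}_{1},\dots,\tilde{f}_{n})$ is $(a_{ik})$ times that of $(f_{1},\dots,f_{n})$, so $J(\tilde{f}_{1},\dots,\tilde{f}_{n})\neq0$ and $\{\tilde{f}_{1},\dots,\tilde{f}_{n}\}$ is a system of basic invariants by the Jacobian criterion. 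Moreover, for $m_{i}\neq m_{j}$ the triangularity gives $\tilde{f}_{i}^{\ast}\tilde{f}_{j}=0$, while for $m_{i}=m_{j}$ the polynomial $\tilde{f}_{i}^{\ast}\tilde{f}_{j}$ is a scalar equal to $\langle\tilde{f}_{i},\tilde{f}_{j}\rangle=\delta_{ij}$; hence $\tilde{f}_{i}^{\ast}\tilde{f}_{j}=\delta_{ij}$ identically, and the system is canonical. I expect the main obstacle to be exactly this last upgrade, from the numerical orthonormality produced by Gram--Schmidt to the polynomial identity $f_{i}^{\ast}f_{j}=\delta_{ij}$: it rests on the triangularity above, which in turn depends on Lemma \ref{Stein1} together with the irreducibility of $V$ and the fact that $\HC$ affords the regular representation (the latter used to ensure $\dim(\HC\otimes V^{\ast})^{W}=n$). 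Everything else is bookkeeping along \eqref{epd}.
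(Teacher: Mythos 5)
Your proof is correct and follows essentially the same route as the paper: you unwind the chain \eqref{epd} to show that the polynomials $\varepsilon\circ\tilde{\phi}(dh_{i})$ form a homogeneous basis of $\FC=\langle f_{1},\dots,f_{n}\rangle_{\CB}$ and then orthonormalize degreewise. The only difference is cosmetic --- you re-derive the triangularity $g^{\ast}g_{i}=0$ directly from Lemmas \ref{Stein1} and \ref{weyl}, whereas the paper obtains it from the membership $g_{i}\in\FC=\GC$ given by Proposition \ref{chara-can-sys}\,(1); both rest on the same underlying computation.
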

\begin{flushleft}
{\bf Remark.}
Theorem \ref{const} asserts the same formula as
\cite[Theorem 3.4]{NT} for the real case.
In \cite{NT}, to prove the theorem, we showed the symmetricity of $\phi$
with respect to the inner product \eqref{inner} and
considered eigenvectors of $\tilde{\phi}$.
In contrast, the proof of this paper does not require these arguments.
\end{flushleft}

\end{document}